\def\vbar{\mathchoice{\vrule height6.3ptdepth-.5ptwidth.8pt\kern-.8pt}
   {\vrule height6.3ptdepth-.5ptwidth.8pt\kern-.8pt}
   {\vrule height4.1ptdepth-.35ptwidth.6pt\kern-.6pt}
   {\vrule height3.1ptdepth-.25ptwidth.5pt\kern-.5pt}}
\def\fudge{\mathchoice{}{}{\mkern.5mu}{\mkern.8mu}}
\def\bbc#1#2{{\rm \mkern#2mu\vbar\mkern-#2mu#1}}
\def\bbb#1{{\rm I\mkern-3.5mu #1}}
\def\bba#1#2{{\rm #1\mkern-#2mu\fudge #1}}
\def\bb#1{{\count4=`#1 \advance\count4by-64 \ifcase\count4\or\bba
A{11.5}\or \bbb B\or\bbc C{5}\or\bbb D\or\bbb E\or\bbb F \or\bbc
G{5}\or\bbb H\or \bbb I\or\bbc J{3}\or\bbb K\or\bbb L \or\bbb
M\or\bbb N\or\bbc O{5} \or \bbb P\or\bbc Q{5}\or\bbb R\or\bbc
S{4.2}\or\bba T{10.5}\or\bbc U{5}\or \bba V{12}\or\bba
W{16.5}\or\bba X{11}\or\bba Y{11.7}\or\bba Z{7.5}\fi}}
\newtheorem{theo}{Theorem}[section]
\newtheorem{lemma}{Lemma}[section]
\newtheorem{remark}{Remark}[section]
\newtheorem{prop}{Proposition}[section]
\def \d {\displaystyle}
\def \barr {\begin{array}{l}}
\def \ear {\end{array}}
\def \beq {\begin{equation}}
\def \eeq {\end{equation}}
\def \beqn {\begin{eqnarray}}
\def \eeqn {\end{eqnarray}}
\def \beqn* {\begin{eqnarray*}}
\def \eeqn* {\end{eqnarray*}}
\def \f {\end{document}}
\newcommand{\field}[1]{\mathbb{#1}}
\newcommand{\R}{\field{R}}
\newcommand{\N}{\field{N}}
\begin{document}

\thispagestyle{empty}

\title[Nonlinear feedback stabilization of the disk-beam system]
{Improved results on the nonlinear feedback stabilization of a rotating body-beam system}

\author{Ka\"{\i}s Ammari}
\address{UR Analysis and Control of PDEs, UR13ES64, Department of Mathematics, Faculty of Sciences of Monastir, University of Monastir, 5019 Monastir, Tunisia}
\email{kais.ammari@fsm.rnu.tn}

\author{Ahmed Bchatnia}
\address{UR  Analyse Non-Lin\'eaire et G\'eom\'etrie, UR13ES32, Department of Mathematics, Faculty of Sciences of Tunis, University of Tunis El Manar, Tunisia}
\email{ahmed.bchatnia@fst.utm.tn}

\author{Boumedi\`ene Chentouf$^*$}
\address{Kuwait University, Faculty of Science, Department of Mathematics, Safat 13060, Kuwait}
\email{boumediene.chentouf@ku.edu.kw; chenboum@hotmail.com($^*$corresponding author)}

\begin{abstract} This article is dedicated to the investigation of the stabilization problem of a flexible beam attached to the center of a rotating disk. We assume that the feedback law contains a nonlinear torque control applied on the disk and nonlinear boundary controls exerted on the beam. Thereafter, it is proved that the proposed controls guarantee the exponential stability of the system under a realistic smallness condition on the angular velocity of the disk and general assumptions on the nonlinear functions governing the controls. We used here the strategy of Lasiecka and Tataru \cite{lastat} and Alabau-Boussouira \cite{AB1, AB2}. This permits to improve the stability result shown in \cite{CH:99} in the sense that, on one hand, we deal with a general form of the nonlinear functions involved in the boundary controls. On the other hand, we  manage to weaken the conditions on those functions unlike in \cite{CH:99}, where the authors consider a special type of functions that are almost linear.

\end{abstract}
\subjclass[2010]{35B35, 35R20, 93D20, 93C25, 93D15}
\keywords{Nonlinear stabilization, Rotating body-beam system, Dissipative systems, Energy decay rates}

\maketitle



\section{Introduction}
The stabilization problem of coupled elastic and rigid parts systems has attracted a huge amount of interest and in particular the rotating disk-beam system which arises in the study of large-scale flexible space structures \cite{Ba:87}. It consists of a flexible beam (B), which models a flexible robot arm, clamped at one end to the center of a disk (D) and free at the other end (see Figure  \ref{fig0}). Additionally, it is assumed that the center of mass of the disk is fixed in an inertial frame and rotates in that frame with a non-uniform angular velocity. Under the above assumptions, the motion of the whole structure is governed by the following nonlinear hybrid system (see \cite{Ba:87} for more details):

\begin{equation}\label{1n}
\left \lbrace
\begin{array}{ll}
\rho y_{tt}(x,t) + EI y_{xxxx}(x,t) = \rho \omega^2 (t) \, y(x,t), & (x,t) \in (0,\ell) \times (0,\infty),\\
y(0,t) = y_x (0,t) =0, & t>0, \\
y_{xxx} (\ell,t)  = \d {\mathcal F} (t), & t>0,  \\
y_{xx} (\ell,t)={\mathcal M} (t), & t>0, \\
\displaystyle \frac{\textstyle d }{dt}  \left\{ \omega (t) \left( {\textstyle I_d + \displaystyle \int_0^{\ell} \rho y^2 (x,t) \, dx} \right) \right \}=  {\mathcal T} (t) , & t>0,\\
y(x,0)=y_0(x), \; y_t(x,0)=y_1(x), & x \in (0,\ell),\\
\omega(0)=\omega_0 \in \R ,
\end{array}
\right.
\end{equation}
in which $y$ is the beam's displacement in the rotating plane  at time $t$ with respect to the spatial variable $x$ and $\omega $ is the angular velocity of the disk. Furthermore, $\ell$ is the length of the beam and the physical parameters $EI, \rho$ and $I_d$ are respectively the flexural rigidity, the mass per unit length of the beam, and the disk's moment of inertia. Lastly, ${\mathcal M} (t)$ and ${\mathcal F} (t)$ are  respectively the moment and force control exerted at the free-end of the beam, while the torque control ${\mathcal T} (t)$ acts on the disk.

\begin{figure}[ht]
\includegraphics[width=0.45 \textwidth]{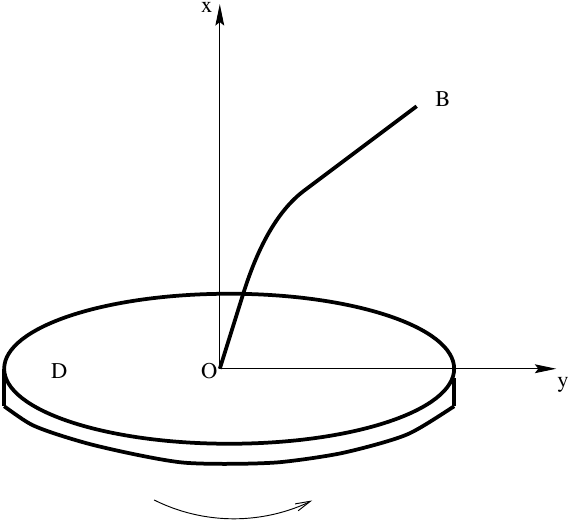}
\caption{The disk-beam system}
\label{fig0}
\end{figure}


After the pioneer work of Baillieul \&  Levi  \cite{Ba:87}, the stabilization problem of the rotating system has been the object of a considerable mathematical endeavor (see for instance \cite{bt:90}, \cite{bc1}-\cite{bc8}, \cite{ac:98, gu, lxs:96, mor1, Mo1:91, mor2, Mo2:94, xb:93}, and the references therein).

Instead of surveying the vast literature on this subject, we are going to highlight only those closely related to the context of our main concern, which is {\bf nonlinear} stabilization of the system. In fact, to the best of our knowledge, very few works have been devoted to nonlinear stabilization of the system (\ref{1n}). Specifically, the authors in \cite{ac:98} constructed a nonlinear feedback torque control law ${\mathcal T} (t)$, which globally asymptotically stabilizes the system. Later, an exponential stabilization result has been established in \cite{CH:99} via a class of nonlinear boundary and torque controls defined below
\begin{equation}
\left \{
\barr
{\mathcal F}  (t)= g \left( y_t(1,t) \right),\\
{\mathcal M} (t)=-f(y_{tx}(1,t)),\\
{\mathcal T}  (t)=-\gamma(\omega (t)-\varpi),
\ear
\right.
\label{(F)}
\end{equation}
where $\varpi \in \R $ and $f, g$ and $\gamma$ are nonlinear real functions. In fact, the results in \cite{CH:99} is obtained under very restrictive conditions, which we can summarize as follows: $f$ and $g$ are almost everywhere linear functions (see \cite{CH:99} for more details). We also note that in \cite{guo}, the authors proposed a linear torque control and a {\bf nonlinear interior control of type viscous damping} to obtain the exponential stability of the closed loop system.

\smallskip

It is worth mentioning that the boundary stabilization of elastic systems has been the subject of extensive works. For instance, the case of the wave equation with nonlinear boundary feedback has been considered in \cite{Las, wa} for the asymptotic  stability properties (see also \cite{zu}). We also note that the question of the uniform stabilization or pointwise stabilization of the Euler-Bernoulli beams or serially connected beams has been studied by a number of authors (cf. F. Alabau-Boussouira \cite{AB1, AB2, AB3}, K. Ammari and M. Tucsnak \cite{amm1}, K. Ammari and Z. Liu \cite{fi1}, K. Ammari \cite{fi2}, K. Ammari et al. \cite{fi3}, K. Ammari and S. Nicaise \cite{amm2}, M. A. Ayadi, A. Bchatnia \cite{AY}, M. A. Ayadi, A. Bchatnia, M. Hamouda and S. Messaoudi \cite{ABHM}, G. Chen, M. C. Delfour, A. M. Krall and G. Payre \cite{chen}, J. L. Lions \cite{lion}, K. Nagaya \cite{nag}, Lasiecka and Tataru \cite{lastat} etc.).

\smallskip
The novelty of the present work is to provide a more general approach to the stabilization problem (\ref{1n})-(\ref{(F)}) under less restrictive conditions on the nonlinear functions $f$ and $g$. Before going into details, we would like to point out that the presence of the moment control ${\mathcal M} (t)=-f(y_{tx}(1,t))$ in the feedback law (\ref{(F)}) is not required to get the stability result of the closed-loop system but provides more dissipation to the energy of the system (see Lemma 3.1). In other words, one can take ${\mathcal M} (t)=0$ and our stability results will remain valid. This will be made more clear later (see Remark \ref{nrem}). In turn, for sake of simplicity and without loss of generality, we shall take $g=-f$ and show that the closed-loop system possesses the very desirable property of exponential stability as long as the angular velocity is bounded (see (\ref{ang})) and the functions $f$ obeys weaker conditions than those in \cite{CH:99}. Indeed, the stability outcome of \cite{CH:99} becomes a particular case of our findings.

\smallskip
Now, let us briefly present an overview of this paper. In Section \ref{pre}, we formulate the closed-loop system in an abstract first-order evolution equation in an appropriate state space. Section \ref{sta1} deals with the long time behavior (in a general context) of a subsystem associated to the closed-loop system. In section \ref{exa}, some examples are provided. Section \ref{sta} is devoted to the proof of the stability of the closed-loop system. Finally, the paper closes with conclusions and discussions.

\section{Preliminaries}\label{pre}
First, one can use an appropriate change of variables so that $\ell=1$. Then, the closed loop system (\ref{1n})-(\ref{(F)}) can be written as follows:
\begin{equation}
\left \lbrace
\begin{array}{ll}
 \rho y_{tt} + EI y_{xxxx} = \rho \omega ^2 (t) y, & (x,t) \in (0,1) \times (0,\infty), \\
y(0,t) = y_x (0,t) =0, & t >0,\\
y_{xx} (1,t) = -f(y_{tx}(1,t)), & t >0, \\
 y_{xxx} (1,t) = f(y_t(1,t), & t >0, \\
\displaystyle \dot\omega (t) =\displaystyle \frac{-\gamma(\omega(t)-\varpi) -2 \rho
\omega (t) \displaystyle \int_0^1 y(t) y_t(t) dx}{\textstyle I_d +
\rho \displaystyle\int_0^1 y(t)^2 dx }, & t >0,\\
y(x,0)=y_0(x), \; y_t(x,0)=y_1(x), & x \in (0,1),\\
\omega(0)=\omega_0 \in \R . \label{cls}
\end{array}
\right.
\end{equation}
Thereafter, let $z(\cdot,t)=y_t(\cdot,t)$ and consider the state variable
$$ (\phi (t),\omega(t))=(y(\cdot,t), z(\cdot,t),\omega(t)).$$
Next, for $ n \in \N, n \geq 2 $, we denote by
$$ H_l^n=\{ u \in H^n(0,1);\; u(0)=u_x(0)=0 \}.$$
Now, consider the state space ${\mathcal X}$ defined by
$$ {\mathcal X} = \underbrace{H_l^2 \times L^2(0,1)}_{\mathcal H} \times  \R ={\mathcal H}  \times \R, $$
equipped with the following real inner product (the complex case is similar):
\begin{eqnarray}
\langle (y,z,\omega), (\tilde y,\tilde z, \tilde \omega) \rangle_{\mathcal X} &=& \langle (y,z), (\tilde y,\tilde z  )\rangle_{\mathcal H} + \omega\tilde \omega \nonumber \\
&=&  \displaystyle \int_0^{1} \left( EI y_{xx} \tilde y_{xx}- \rho \varpi^2 y \tilde y+ \rho  z \tilde z \right)  dx   +   \omega \tilde \omega.
\label{ip}
\end{eqnarray}
Note that the norm induced by this scalar product is equivalent to the usual one of the Hilbert
space $  H^2 (0,1)\times L^2(0,1) \times \R $ provided that (see \cite{CH:99})
\begin{equation}
\mid  \varpi
\mid < 3\sqrt{EI/ \rho}.
\label{ang}
\end{equation}

\medskip

Now, we define a nonlinear operator $A$ by
\begin{equation}
{\mathcal D}(A) = \left \{ (y,z ) \in H_l^4 \times H_l^2; \;y_{xxx}(1)= f(z(1)), \; y_{xx} (1)= - f( z_x (1))
\right \},
\label{(5)}
\end{equation}

\medskip
and
\begin{equation}
A(y,z) = \left( -z, \frac{EI}{\rho} y_{xxxx}-{ \varpi} ^2 y \right).
\label{(6)}
\end{equation}

Whereupon the system (\ref{cls}) can be formulated in ${\mathcal X}$ as follows
\begin{equation}
\left( \dot \phi(t) , \dot \omega (t) \right)
+ \left( {\mathcal A} +{\mathcal B} \right) \left( \phi(t) , \omega (t) \right) =0,
\label{(6.0)}
\end{equation}
in which
\begin{equation}
\left \lbrace
\begin{array}{l}
{\mathcal A} ( \phi, \omega )=\left(  A \phi, 0  \right), \; \mbox{with} \; {\mathcal D}(\mathcal A)
 = {\mathcal D}(A) \times \R,\\[5mm]
{\mathcal B} ( \phi, \omega ) = \left( 0, ({\varpi} ^2 - \omega ^2 )y, \displaystyle
 \frac{\textstyle \gamma \left( \omega - \varpi \right) + 2 \rho \; \omega (t)
 < y, z >_{L^2 (0,1)}}{\textstyle I_d + \rho \| y\|_{L^2 (0,1)} ^2 } \right).
\end{array}
\right.
\label{(6.1)}
\end{equation}

\medskip

We note that we will adapt here the approach of Lasiecka \& Tataru in \cite{lastat} and of Alabau-Boussouira in \cite{AB1, AB2}. {To proceed,} we suppose that the functions $f$ and $\gamma$ satisfy the following conditions:
\begin{quote}
{\bf H.I:} $f$: $\mathbb{R}\rightarrow \mathbb{R}$ is a non-decreasing differentiable function such that $f(0)=0$.\\
{\bf H.II:} There exists a strictly increasing function $f_{0} \in C([0,+\infty))$ and continuously differentiable in a neighborhood of $0$ such that $f_0(0)=f_0'(0)=0$ and
$$ \left\{
\begin{array}{l}
f_{0}(\vert s \vert)\leq \vert f(s) \vert \leq f_{0}^{-1}(\vert s \vert),%
\hspace{1.65cm} \text{for all}\hspace{1.2em} \vert s\vert <\sigma, \\
c_{1} \vert s \vert\leq \vert f(s)\vert \leq c_{2} \vert s\vert ,\hspace{%
24mm}\text{ for all} \hspace{4mm}\vert s\vert\geq \sigma,
\end{array}
\right.
$$
where $c_{i} > 0$ for i = 1, 2 and for some $\sigma >0.$ \\
{\bf H.III:} The function $\gamma$ is Lipschitz on each bounded subset
of $\R$ and for some $\kappa>0$, 
$$
\gamma(s)s \geq 0,~\;  \mid
\gamma(s)\mid \; \geq \kappa \mid s\mid, \; \forall \,
 s \in \R.
$$
\end{quote}

Moreover, we define a function $H$ by
\begin{equation}\label{210}
 H(x)=\sqrt{x}f_{0}(\sqrt{x}), \, \forall \, x \geq 0.
\end{equation}
Thanks to the assumptions {\bf H.I}-{\bf H.II}, the function $H$ is of class $C^{1}$ and is strictly convex on $(0,r^{2}]$, where $r > 0$ is a sufficiently small number.

\section{Exponential stability of a subsystem}\label{sta1}
In this section, we shall deal with the following subsystem
\begin{equation}
\left \lbrace
\begin{array}{ll}
\rho y_{tt} + EI y_{xxxx} -\rho { \varpi} ^2 y=0, & 0 <x <1, \, t  >0,\\
y(0,t) = y_x (0,t) =  0, & t > 0, \\
y_{xx} (1,t) = - \, f(y_{tx}(1,t)), & t > 0,\\
y_{xxx} (1,t) =\, f(y_{t}(1,t)), & t > 0,\\
y(x,0) = y_0(x), \, y_t (x,0) = y_1 (x), & 0<x<1,
\label{(1V)}
\end{array}
\right.
\end{equation}
or equivalently
\begin{equation}
\left \lbrace
\begin{array}{l}
\dot \phi(t) + A \phi(t) =0,\\
\phi(0) = \phi_0,
\end{array}
\right.
\label{(6.2)}
\end{equation}
where $\phi=(y,z)$, $z=y_t$ and $A$ is defined by (\ref{(5)})-(\ref{(6)}).

\medskip

We have the following proposition whose proof can be found in {\cite{CH:99} (see Lemma 5 and Proposition 3):}
\begin{prop}
Assume that the condition (\ref{ang}) holds, that is, $\mid  \varpi \mid < 3\sqrt{ EI/ \rho}$. Then, for any function $f$ satisfying solely the assumption {\bf H.I}, the operator $A$ defined by (\ref{(5)})-(\ref{(6)}) is m-accretive in  ${\mathcal H}$ with dense domain. Additionally, we have:

\medskip
\begin{enumerate}
\item
For any initial data $ \phi _0 = (y_0,y_1) \in {\mathcal D}(A)$, the system (\ref {(6.2)}) admits a unique solution $\phi (t)= (y(t),z(t)) \in {\mathcal D}(A)$ such that
$$ (y,y_t) \in L ^{\infty}(\R^+;{\mathcal D}(A)),\;\; \mbox{and}\;\;
\frac{d}{dt}(y,y_t) \in L ^{\infty}(\R^+;{\mathcal H}).$$
The solution $\phi=(y,z)$ is given by $\phi(t)=e^{- At}\phi _0,$ for all
$t\geq 0$
where $\left(e^{- At}\right)_{t \geq 0}$ is the semigroup generated by $- A$ on $\overline {{\mathcal D} (A)} = {\mathcal {H}} $.
Moreover, the function $ t \longmapsto \Vert  A \phi (t) \Vert_{\mathcal H} $
is decreasing.

\item
For any initial data $\phi _0 = (y_0,y_1) \in \overline {{\mathcal D} (A)} = {\mathcal {H}}$, the equation (\ref {(6.2)}) admits a unique mild solution $\phi(t)=
 e^{- At}\phi _0$ which is bounded on ${\R ^+}$ by $||\phi_0||_{\mathcal H}$ and
$$ \phi=(y,y_t) \in C^0 (\R^+; {\mathcal {H}} ).$$

\item The semigroup $\left(e^{- A t}\right)_{t \geq 0}$ is asymptotically stable in ${\mathcal H} $.

\label{(pp3)}
\end{enumerate}
\end{prop}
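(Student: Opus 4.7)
The plan is to reduce the proposition to showing that $A$ is $m$-accretive on $\mathcal H$ with dense domain, since claims (1) and (2) then follow from the Crandall-Liggett/Kato theory for nonlinear contraction semigroups, and (3) reduces to a LaSalle-type argument built on the natural dissipation identity.

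For \emph{accretivity}, I would pick $\phi_i=(y_i,z_i)\in\mathcal D(A)$ and set $Y=y_1-y_2$, $Z=z_1-z_2$. A direct computation from (\ref{(6)}) and (\ref{ip}) makes the two $\rho\varpi^2$ cross terms cancel, leaving
$$
\langle A\phi_1-A\phi_2,\phi_1-\phi_2\rangle_{\mathcal H}=\int_0^1 EI\bigl(Y_{xxxx}Z-Z_{xx}Y_{xx}\bigr)\,dx.
$$
Two integrations by parts, combined with the clamped conditions at $x=0$, with $Y_{xxx}(1)=0$, and with $Y_{xx}(1)=-\bigl(f(z_{1,x}(1))-f(z_{2,x}(1))\bigr)$, collapse the right-hand side to $EI\bigl(f(z_{1,x}(1))-f(z_{2,x}(1))\bigr)\bigl(z_{1,x}(1)-z_{2,x}(1)\bigr)\ge 0$, nonnegative by the monotonicity of $f$ in {\bf H.I}. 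For the \emph{range condition} $R(I+A)=\mathcal H$, the equation $(I+A)(y,z)=(u,v)$ decouples into $z=y-u$ and the nonlinear elliptic problem $\tfrac{EI}{\rho}y_{xxxx}+(1-\varpi^2)y=u+v$ with boundary conditions $y(0)=y_x(0)=0$, $y_{xxx}(1)=0$, $y_{xx}(1)=-f(y_x(1)-u_x(1))$. I would formulate it variationally on $H^2_l$: the bilinear part is coercive precisely under (\ref{ang}), thanks to the Poincaré-type inequality on $H^2_l$ that underlies the equivalence of $\langle\cdot,\cdot\rangle_{\mathcal H}$ with the usual $H^2\times L^2$ inner product, while the nonlinear boundary contribution is monotone and continuous; hence the Minty-Browder theorem provides a unique weak solution, and elliptic regularity promotes it to $H^4$. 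Density of $\mathcal D(A)$ is standard. Claims (1) and (2) then drop out from Crandall-Liggett, with $t\mapsto\|A\phi(t)\|_{\mathcal H}$ nonincreasing as a general feature of maximal monotone flows and with the mild solution obtained by continuous extension to $\overline{\mathcal D(A)}=\mathcal H$.

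For claim (3), I would first derive the dissipation identity
$$
\frac{d}{dt}\tfrac12\|\phi(t)\|_{\mathcal H}^2=-EI\,f(z_x(1,t))\,z_x(1,t)\le 0,
$$
and then invoke LaSalle's invariance principle. For $\phi_0\in\mathcal D(A)$ the a priori bound on $A\phi(t)$ makes the trajectory relatively compact in $\mathcal H$ by Sobolev embedding, so its $\omega$-limit set $\Omega$ is nonempty and invariant. On $\Omega$ the dissipation vanishes, so $z_x(1,t)\equiv 0$ and hence $y_{xx}(1,t)=0$; the linear equation $\rho y_{tt}+EIy_{xxxx}-\rho\varpi^2y=0$ with the overdetermined boundary data $y(0)=y_x(0)=y_{xx}(1)=y_{xxx}(1)=y_{tx}(1)=0$ then admits only the trivial solution, via a Fourier expansion in the clamped-beam eigenbasis of the underlying spatial operator. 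Thus $\phi\equiv 0$ on $\Omega$, and density of $\mathcal D(A)$ in $\mathcal H$ extends the convergence to all mild solutions. The main obstacle I anticipate is precisely this unique-continuation step: ruling out nontrivial limit trajectories requires a spectral nonresonance check for the clamped-beam operator shifted by $-\varpi^2$, a check that, like the coercivity in the range step, is tethered to the size condition (\ref{ang}).
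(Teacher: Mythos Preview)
The paper does not actually prove this proposition: it simply states ``whose proof can be found in \cite[Lemma 5 and Proposition 3]{CH:99}'' and moves on. Your outline---accretivity via the boundary computation with the monotonicity of $f$, surjectivity of $I+A$ through a coercive/monotone variational problem on $H^2_l$ (coercivity guaranteed by (\ref{ang})), Crandall--Liggett for (1)--(2), and LaSalle with a unique-continuation/observability step for (3)---is exactly the standard route and is, in fact, the approach taken in \cite{CH:99}; so your proposal is correct and aligned with the cited proof.

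One small remark on your final paragraph: the overdetermined problem you reach on the $\omega$-limit set is precisely the observability condition $y_{tx}(1,\cdot)\equiv 0$ for the conservative clamped--free beam with $y_{xx}(1)=y_{xxx}(1)=0$. In \cite{CH:99} this is handled by expanding in the eigenbasis of the self-adjoint operator $\tfrac{EI}{\rho}\partial_x^4-\varpi^2$ with those boundary conditions, noting that (\ref{ang}) makes all eigenvalues positive so each mode is time-periodic with a distinct frequency, and then checking $\phi_n'(1)\neq 0$ for every eigenfunction. Your anticipation of this as the delicate point is accurate, and your proposed Fourier argument is the right one; just be sure to justify $\phi_n'(1)\neq 0$, which follows from the ODE for $\phi_n$ together with $\phi_n''(1)=\phi_n'''(1)=0$ (otherwise $\phi_n\equiv 0$ by Cauchy uniqueness at $x=1$).
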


The first main result of this article is:

\begin{theo}
\label{Theop} Let us suppose that the conditions (\ref{ang}) and {\bf H.I}-{\bf H.II} hold, and let $(y_0,y_1) \in {\mathcal H}$. Then, there exist positive constants $k_{1}$, $k_{2}$, $k_{3}$ and $\varepsilon_{0}$ such that the energy $E(t)$ associated to the solution of (\ref{(1V)}) satisfies
\begin{eqnarray}
E(t) : = \frac{1}{2} \, \int_0^1 \left( \rho y_t^2(x,t) + EI y_{xx}^2(x,t) -\rho \varpi^2 y^2   \right) \, dx
\leq k_{3} \, H^{-1}_{1} \left( k_{1}t+
k_{2} \right), \,  \, \forall \, t\geq 0,   \label{theoremenergy1}
\end{eqnarray}
where
\[
H_{1}(t) = \int_{t}^{1} \frac{1}{H_{2}(s)}ds,\hspace{1cm} H_{2}(t)=t \, H^\prime (\varepsilon_{0} t).
\]
Here $H_{1}$ is a strictly decreasing and convex function on $(0,1]$, with $\displaystyle\lim_{t\rightarrow0}H_{1}(t)= +\infty$.
\end{theo}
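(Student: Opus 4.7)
The plan is to adapt the multiplier-Lyapunov method of Alabau-Boussouira \cite{AB1, AB2} to the beam system (\ref{(1V)}) with the nonlinear shear boundary dissipation $y_{xx}(1,t)=-f(y_{tx}(1,t))$, working throughout with the augmented energy $E_0(t)$ that bundles the $(y,y_t)$ and $(y_t,y_{tt})$ levels simultaneously. The reason for this \emph{modified} energy is the standard Alabau device: the sharp rate is first obtained on a higher-regularity energy and then transferred to the natural one by interpolation. Here I exploit the fact that, for data in $\mathcal{D}(A)$, the function $y_t$ satisfies the same beam equation as $y$, now with the linearised boundary condition $y_{txx}(1,t)=-f'(y_{tx}(1,t))\,y_{ttx}(1,t)$.

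The first step is the dissipation law. Multiplying the PDEs for $y$ and $y_t$ by $y_t$ and $y_{tt}$ respectively and integrating by parts with the clamped-free conditions yields
\[
\tfrac{d}{dt}\Bigl[\tfrac{1}{2}\!\int_0^1\!(\rho y_t^2+EI\,y_{xx}^2-\rho\varpi^2 y^2)\,dx\Bigr]=-EI\,f(y_{tx}(1,t))\,y_{tx}(1,t),
\]
\[
\tfrac{d}{dt}\Bigl[\tfrac{1}{2}\!\int_0^1\!(\rho y_{tt}^2+EI\,y_{txx}^2-\rho\varpi^2 y_t^2)\,dx\Bigr]=-EI\,f'(y_{tx}(1,t))\,y_{ttx}^2(1,t).
\]
Both right-hand sides are nonpositive under \textbf{H.I}, so $E_0$ is decreasing and the dissipated mass is controlled by the two boundary quantities $D_1(t):=f(y_{tx}(1,t))\,y_{tx}(1,t)$ and $D_2(t):=f'(y_{tx}(1,t))\,y_{ttx}^2(1,t)$.

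The second step is a multiplier estimate. I apply the classical beam multiplier $xy_x$ to the $y$-equation and $xy_{tx}$ to the $y_t$-equation on a window $(S,T)$. After the usual integrations by parts, the interior terms reassemble a positive multiple of the integrand of $E_0$ (condition (\ref{ang}) being exactly what makes the indefinite form with $-\rho\varpi^2$ equivalent to the $H^2\times L^2$ norm), and the traces at $x=1$ reduce, via $y_{xxx}(1)=0$ and the nonlinear boundary conditions, to linear combinations of $y_{tx}^2(1)$, $f^2(y_{tx}(1))$ and $y_{ttx}^2(1)$, together with lower-order point values absorbed by standard trace and interpolation inequalities. The outcome is an observability-type inequality
\[
\int_S^T\!E_0(t)\,dt\le C\bigl(E_0(S)+E_0(T)\bigr)+C\!\int_S^T\!\bigl[y_{tx}^2(1,t)+f^2(y_{tx}(1,t))+y_{ttx}^2(1,t)\bigr]\,dt.
\]

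The third step converts this into the $H_1^{-1}$ decay via the Alabau nonlinear estimation. On the set $\{t:|y_{tx}(1,t)|\ge 1\}$, hypothesis \textbf{H.II} gives the linear bound $y_{tx}^2(1)+f^2(y_{tx}(1))\le C\,D_1(t)$ pointwise. On the complementary small-amplitude set, the inequality $f_0(|s|)\le|f(s)|\le f_0^{-1}(|s|)$ together with Jensen's inequality for the strictly convex $H(x)=\sqrt{x}\,f_0(\sqrt{x})$ turns the corresponding time integral into an expression of the form $T\,H^{-1}\!\bigl((E_0(S)-E_0(T))/T\bigr)$. Collecting everything and optimising in $T$ produces the difference inequality $E_0((k{+}1)T_0)+c\,H_2(E_0(kT_0))\le E_0(kT_0)$, whose continuous counterpart $\dot{u}+c\,H_2(u)=0$ integrates, thanks to $H_1(t)=\int_t^1 ds/H_2(s)$, into $u(t)=H_1^{-1}(k_1 t+k_2)$; this gives exactly (\ref{theoremenergy1}). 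The principal obstacle is the treatment of the secondary boundary term $\int_S^T y_{ttx}^2(1,t)\,dt$: the dissipation only offers the weighted quantity $f'(y_{tx}(1))\,y_{ttx}^2(1)$, and \textbf{H.I}--\textbf{H.II} provide no quantitative lower bound on $f'$. I would overcome this by using that along strong solutions $f(y_{tx}(1,\cdot))$ is absolutely continuous with $\tfrac{d}{dt}f(y_{tx}(1,t))=f'(y_{tx}(1,t))\,y_{ttx}(1,t)$, so that a Cauchy--Schwarz argument controls $\int y_{ttx}^2(1)$ in terms of $\int D_2$ up to harmless factors, thereby bringing the $y_t$-level contribution into the same Alabau convex scheme as the primary one.
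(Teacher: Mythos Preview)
Your overall scheme is recognisably Alabau--Boussouira, but it diverges from the paper's execution in a way that introduces a real gap rather than merely a stylistic difference.

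\textbf{What the paper does.} The paper applies the multiplier $F(t)=2\int_0^1 x\,y_t y_x\,dx$ \emph{only at the $(y,y_t)$-level}; it never multiplies the $y_t$-equation by $xy_{tx}$. The resulting estimate (Lemma~3.2) has a single ``bad'' trace $y_t^2(1,t)$, bounded via $y_t^2(1)\le\tfrac13\int_0^1 y_{txx}^2$, which is precisely why the modified energy $E_0$ carries the extra $(y_t,y_{tt})$ piece. The proof then builds a differential Lyapunov functional
\[
R(t)=H'\!\Bigl(\varepsilon_0\,\tfrac{E_0(t)}{E_0(0)}\Bigr)E_0(t)+\delta E(t),
\]
and closes the estimate through the convex conjugate $H^{\ast}$ and Young's inequality (not through time--integration plus Jensen). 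The $y_t$-level dissipation $-EI\,f'(y_{tx})\,y_{ttx}^2$ is simply discarded because it is $\le 0$; it is never used as a source of control, so no lower bound on $f'$ is required.

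\textbf{Where your proposal breaks.} Two places. First, when you apply $xy_{tx}$ to the $y_t$-equation, the boundary contribution is not just the list you wrote: the analogue of $y_t^2(1,t)$ in the paper's computation becomes $y_{tt}^2(1,t)$ at this level, and the trace inequality would send you to $\int_0^1 y_{ttxx}^2$, i.e.\ one more derivative than $E_0$ contains. The cascade does not close with the two-level energy $E_0$ you started from. Second, the fix you propose for $\int_S^T y_{ttx}^2(1,t)\,dt$ does not work: from $\tfrac{d}{dt}f(y_{tx}(1,t))=f'(y_{tx})\,y_{ttx}$ and Cauchy--Schwarz you can at best control quantities like $\int f'\,y_{ttx}$ or $\int (f')^2 y_{ttx}^2$, not $\int y_{ttx}^2$ itself, unless you assume a uniform lower bound $f'\ge c>0$, which \textbf{H.I}--\textbf{H.II} do not provide (indeed $f_0'(0)=0$ is allowed).

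\textbf{How to repair.} Drop the second multiplier entirely and follow the paper: use only $F(t)=2\int_0^1 x y_t y_x$, accept the term $\tfrac13\int_0^1 y_{txx}^2$ in $F'$, and run the convexity argument through $R(t)$ and $H^{\ast}$ rather than the integrated Jensen route. That way the second-order dissipation term $-EI\,f'(y_{tx})\,y_{ttx}^2$ is only ever used with its sign, and no information on $f'$ is needed.
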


Hereby, the remaining task is to provide a proof of the above theorem. To fulfill this objective, we shall establish several lemmas. 
Note that in view of a standard density argument together with the contraction of the semigroup, it suffices to prove Theorem \ref{Theop}  for a strong solution stemmed from a regular initial data in the domain  ${{\mathcal D} (A)}$.

First, we have
\begin{lemma}\label{lemmaenergie}
Let $y$ be a solution of the system (\ref{(1V)}). Then, the functional $E$ satisfies
\begin{equation}
E^{\prime}(t)=-EI y_{tx}(1,t)f(y_{tx}(1,t))-EI y_{t}(1,t)f(y_{t}(1,t)) \leq 0.
\label{energyde}
\end{equation}
\end{lemma}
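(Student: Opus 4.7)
The plan is to derive the standard dissipation identity for the beam equation with a nonlinear moment feedback at the free end. The idea is to differentiate $E(t)$ in time, substitute the PDE to eliminate $y_{tt}$, and then integrate by parts in the spatial variable twice so that the internal terms cancel and only boundary contributions remain, which are handled by the boundary conditions.

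Concretely, assuming first that $(y_0, y_1) \in \mathcal{D}(A)$ so that Proposition \ref{(pp3)}(1) supplies the regularity needed to differentiate under the integral sign, I would compute
\begin{equation*}
E'(t) = \int_0^1 \bigl( \rho y_t y_{tt} + EI\, y_{xx} y_{txx} - \rho \varpi^2 y y_t \bigr) \, dx,
\end{equation*}
and substitute $\rho y_{tt} = -EI\, y_{xxxx} + \rho \varpi^2 y$ from the first equation of (\ref{(1V)}). The $\rho \varpi^2 y y_t$ contributions cancel, leaving $E'(t) = -\int_0^1 EI\, y_{xxxx} y_t\, dx + \int_0^1 EI\, y_{xx} y_{txx}\, dx$.

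Two integrations by parts on the first integral produce
\begin{equation*}
\int_0^1 EI\, y_{xxxx}\, y_t\, dx = EI\bigl[y_{xxx} y_t - y_{xx} y_{tx}\bigr]_0^1 + \int_0^1 EI\, y_{xx} y_{txx}\, dx,
\end{equation*}
so the bulk terms cancel and we are left with a pure boundary contribution. The clamped conditions $y(0,t) = y_x(0,t) = 0$ force $y_t(0,\cdot) = y_{tx}(0,\cdot) = 0$, killing the $x = 0$ endpoint. At $x = 1$, the condition $y_{xxx}(1,t) = 0$ eliminates the first summand, while the nonlinear boundary condition $y_{xx}(1,t) = -f(y_{tx}(1,t))$ turns the second into $EI\, f(y_{tx}(1,t))\, y_{tx}(1,t)$. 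Putting everything together yields exactly $E'(t) = -EI\, y_{tx}(1,t)\, f(y_{tx}(1,t))$, and the monotonicity assumption \textbf{H.I} (that $f$ is non-decreasing with $f(0) = 0$) gives $s f(s) \geq 0$ for every $s \in \R$, so $E'(t) \leq 0$.

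The main obstacle is not really a conceptual one but a regularity issue: differentiating under the integral sign and performing these integrations by parts requires $y_{tt}$ and the boundary traces $y_{xxx}(1,\cdot)$, $y_{tx}(1,\cdot)$ to make classical pointwise sense in time. For strong solutions this is provided by Proposition \ref{(pp3)}(1), and the identity then extends to mild solutions by density using the contraction property of the semigroup $(e^{-At})_{t\ge 0}$. Every remaining manipulation is a routine boundary-beam calculation.
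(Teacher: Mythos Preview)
Your argument is correct and follows exactly the same route as the paper: multiply the equation by $y_t$, integrate by parts over $(0,1)$, use the boundary conditions, and invoke \textbf{H.I} for the sign. Your write-up is simply more detailed than the paper's one-line proof.
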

\begin{proof}
Multiplying the first equation in (\ref{(1V)}), by $y_{t}$, using integrations by parts with respect to $x$ over $(0,1)$ along with the boundary conditions and the hypotheses {\bf H.I}, we obtain \eqref{energyde}.
\end{proof}

The second lemma is
\begin{lemma}\label{lemmaF}
Let $(y,y_t)$ be a solution of the system (\ref{(1V)}). Then, there exists $\theta >0$ such that the functional
\begin{equation}
F(t):= 2 \int_0^1 x y_t(x,t) y_x (x,t) \, dx,\;\; t\geq 0, \label{F}
\end{equation}
verifies the following estimate
\begin{eqnarray}\label{F'}
F^\prime(t) &\leq & y_t^2(1,t)+\big(1+2\theta\big)\frac{EI}{\rho}f^2(y_{tx}(1,t))+\theta\frac{EI}{\rho}f^2(y_{t}(1,t))
-K\| (y(t),y_t(t)\|_{\mathcal{H}}^2, \mbox{ a.e. } t\geq 0.
\end{eqnarray}
\end{lemma}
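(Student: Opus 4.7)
The strategy is to apply the classical spatial multiplier $xy_x$ tailored to a clamped-free Euler-Bernoulli beam; the specific constant $1/3$ appearing on the right-hand side of the target inequality will be forced by an optimal trace estimate on $y_t(1,t)$.

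First I would differentiate $F$ in time to get $F'(t)=2\int_0^1 xy_{tt}y_x\,dx+2\int_0^1 xy_ty_{tx}\,dx$, and handle the two pieces separately. The second integral equals $\int_0^1 x(y_t^2)_x\,dx$, which a single integration by parts converts to $y_t(1,t)^2-\int_0^1 y_t^2\,dx$. In the first integral I would substitute $\rho y_{tt}=-EIy_{xxxx}+\rho\varpi^2 y$ from the PDE and then perform two integrations by parts on $\int_0^1 xy_xy_{xxxx}\,dx$, using the clamped and free boundary conditions $y_x(0,t)=y_{xxx}(1,t)=0$, to reach the identity $\int_0^1 xy_xy_{xxxx}\,dx=-y_x(1,t)y_{xx}(1,t)-\tfrac{1}{2}y_{xx}(1,t)^2+\tfrac{3}{2}\int_0^1 y_{xx}^2\,dx$. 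The $\varpi^2$ contribution collapses similarly to $\varpi^2 y(1,t)^2-\varpi^2\int_0^1 y^2\,dx$. Substituting the feedback $y_{xx}(1,t)=-f(y_{tx}(1,t))$ then produces an explicit identity for $F'(t)$ featuring the dissipative term $-\tfrac{3EI}{\rho}\int_0^1 y_{xx}^2\,dx$, the good term $\tfrac{EI}{\rho}f^2(y_{tx}(1,t))$, and the cross term $-\tfrac{2EI}{\rho}y_x(1,t)f(y_{tx}(1,t))$.

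Next I would estimate the four boundary evaluations, exploiting that $y,y_x,y_t,y_{tx}$ all vanish at $x=0$. The Taylor-with-integral-remainder representations $y(1,t)=\int_0^1(1-r)y_{xx}(r,t)\,dr$, $y_x(1,t)=\int_0^1 y_{xx}(r,t)\,dr$, and $y_t(1,t)=\int_0^1(1-r)y_{txx}(r,t)\,dr$, combined with the Cauchy-Schwarz inequality, give $y(1,t)^2\le\tfrac{1}{3}\int_0^1 y_{xx}^2\,dx$, $y_x(1,t)^2\le\int_0^1 y_{xx}^2\,dx$, and the sharp trace bound $y_t(1,t)^2\le\tfrac{1}{3}\int_0^1 y_{txx}^2\,dx$; it is this last inequality that produces the coefficient $1/3$ in the statement. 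The cross term is then split by Young's inequality with parameter $\eta>0$, namely $\tfrac{2EI}{\rho}|y_x(1,t)f(y_{tx}(1,t))|\le\tfrac{EI\eta}{\rho}y_x(1,t)^2+\tfrac{EI}{\rho\eta}f^2(y_{tx}(1,t))$, and I would set $\theta:=1/\eta$, which accounts for the coefficient $(1+\theta)\tfrac{EI}{\rho}$ in front of $f^2(y_{tx}(1,t))$.

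Collecting all the estimates, the leftover terms that must be dominated by $-K\|(y,y_t)\|_{\mathcal H}^2=-K\int_0^1(EIy_{xx}^2-\rho\varpi^2 y^2+\rho y_t^2)\,dx$ have coefficients reducing to three scalar conditions: one on the $y_{xx}^2$-term of the form $K\le(3-\eta)/\rho-\varpi^2/(3EI)$, one on the $y_t^2$-term of the form $K\le 1/\rho$, and an automatic sign check on the $y^2$-term. The main---and essentially the only---difficulty is ensuring that the first condition has a positive solution, namely $3-\eta>\rho\varpi^2/(3EI)$; this is where the smallness assumption \eqref{ang}, $|\varpi|<3\sqrt{EI/\rho}$, intervenes decisively, allowing $\eta$ to be chosen small enough and then $K>0$ small enough. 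The resulting $K$ is the constant in the claimed estimate.
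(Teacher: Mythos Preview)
Your proof is correct and follows essentially the same route as the paper: both use the multiplier $xy_x$, derive the same identity for $F'(t)$, apply Young's inequality to the cross term $-\tfrac{2EI}{\rho}y_x(1,t)f(y_{tx}(1,t))$, invoke the same trace estimates (with the constant $1/3$ coming from $y_t(1,t)^2\le\tfrac13\int_0^1 y_{txx}^2\,dx$), and close via the smallness condition~\eqref{ang}. Your parameter $\eta$ is just the paper's $1/\theta$, and your three scalar conditions on $K$ match exactly the coefficient analysis implicit in the paper's inequality~(3.9).
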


\begin{proof}
We differentiate $F$ with respect to $t$, we get after a straightforward computation:
\begin{eqnarray}\label{Fb}
F^\prime(t) &=& -2 \frac{EI}{\rho} y_{x}(1,t)f(y_{tx}(1,t))-2 \frac{EI}{\rho} y_{x}(1,t)f(y_{t}(1,t))+{ \varpi}^2y^2(1,t)+ \frac{EI}{\rho}f^2(y_{tx}(1,t))\\
&+&y_t^2(1,t)-\int_0^1 \left[y_t^2 (x,t) + 3\frac{EI}{\rho}y^2_{xx} (x,t) +{ \varpi}^2y^2 (x,t) \right] \, dx, \; \hbox{a.e.} \; t\geq0. \nonumber
\end{eqnarray}
Using Young's inequality, we obtain :
\begin{equation}\label{yf}
-2 \frac{EI}{\rho}y_{x}(1,t)f(y_{tx}(1,t)) \leq \frac{EI}{\rho\theta}y_x^2(1,t)+\theta\frac{EI}{\rho}f^2(y_{tx}(1,t)),
\end{equation}
and
\begin{equation}\label{yf1}
-2 \frac{EI}{\rho}y_{x}(1,t)f(y_{t}(1,t)) \leq \frac{EI}{\rho\theta}y_x^2(1,t)+\theta\frac{EI}{\rho}f^2(y_{t}(1,t)),
\end{equation}
for any $\theta>0$.\\
On the other hand, we have
$$ y_x(1,t) = y_x(1,t)-y_x(0,t)=\int_0^1 y_{xx}(x,t) \, dx \leq \|y_{xx}\|_{L^2(0,1)}.$$
Consequently, we obtain
\begin{equation}\label{yx}
y_x^2(1,t)\leq \int_0^1 y^2_{xx}(x,t) \, dx.
\end{equation}
Similarly, we use the fact $y(0,t)=0$ to get
\begin{equation}\label{y2}
y^2(1,t)\leq \frac{1}{3}\int_0^1 y^2_{xx}(x,t)\,dx.
\end{equation}
Combining (\ref{Fb})-(\ref{y2}) yields
\begin{eqnarray}\label{derF}
F^\prime(t) &\leq &y_t^2(1,t) +(1+2\theta)\frac{EI}{\rho}f^2(y_{tx}(1,t))+\theta\frac{EI}{\rho}f^2(y_{t}(1,t))\\
&+&\frac{1}{\rho}\int_0^1 \left[-\rho y_t^2 (x,t) +\left(\frac{2}{\theta}+\frac{{\rho \varpi}^2}{3EI}-3 \right)EIy^2_{xx} (x,t) -\rho{ \varpi}^2y^2 (x,t) \right] \, dx, \, \hbox{a.e.} \, t \geq 0.\nonumber
\end{eqnarray}
Recalling the condition (\ref{ang}), one can choose $\theta$ such that $\frac{2}{\theta}+\frac{\rho \varpi^2}{3EI}-3<0$. Then, we deduce the existence of a positive constant $K$ such that the inequality (\ref{F'}) is verified.
\end{proof}

Now, we are able to achieve the main objective of this section:
\begin{proof}[Proof of Theorem \ref{Theop}]
First of all, let
\begin{equation}\label{func}
E_0(t):= E(t)+ \varepsilon \, F(t)=\frac{1}{2}\| (y,y_{t})\|_{\mathcal{H}}^2+ \varepsilon \, F(t)
\end{equation}
in which $\varepsilon$ is a sufficiently small positive constant to be chosen later (see below (\ref{eo})) and $F(t)$ is defined by (\ref{F}). \\
It is easy to verify that the functional $E_0$ is equivalent to the energy $E$.\\
In fact, we have
$$|E_0(t)- E(t)|= \varepsilon \, |F(t)|\leq c\varepsilon \,E(t),$$
where $c$ is a positive constant.\\
On one hand, it follows from (\ref{F'}) and (\ref{func}) that
\begin{eqnarray}
E^\prime_0(t) &\leq & E'(t)+\varepsilon y_t^2(1,t)+\varepsilon\big(1+2\theta\big)\frac{EI}{\rho}f^2(y_{tx}(1,t))
+\varepsilon\theta\frac{EI}{\rho}f^2(y_{t}(1,t))\nonumber \\
&& -\varepsilon K E(t), \mbox{ a.e. } t\geq 0. \label{zz1}
\end{eqnarray}

Note also that we can suppose that $\max(r,f_0(r))<\sigma,$ where $r$ and $\sigma$ are defined in {\bf H.II} and (\ref{210}). Thereafter, we set $\upsilon=\min(r,f_0(r)),$ and we hence deduce from {\bf H.II} that,
$$\frac{f_0(\upsilon)}{\sigma}|s|\leq f_0(|s|)\leq f(|s|) \leq f_0^{-1}(|s|)\leq \frac{f_0^{-1}(\sigma)}{\upsilon}|s|,$$
for $\upsilon \leq |s| \leq \sigma.$ The latter implies that

\begin{equation}\label{H22}
\left\{
\begin{array}{l}
f_{0}(\vert s \vert)\leq \vert f(s) \vert \leq f_{0}^{-1}(\vert s \vert),
\hspace{1.65cm} \text{for all}\hspace{1.2em} \vert s\vert < \upsilon, \\
c'_{1} \vert s \vert\leq \vert f(s)\vert \leq c'_{2} \vert s\vert ,\hspace{
24mm}\text{ for all} \hspace{4mm}\vert s\vert\geq \upsilon,
\end{array}
\right.
\end{equation}
where $c'_{i} > 0,$ for i = 1, 2. Additionally, it follows from (\ref{210}) that $\displaystyle H(s^2)=|s|f_0(|s|)=sf_0(s)$ and $\displaystyle H(f^2(s))=|f(s)|f_0(|f(s)|),$ $\forall s\geq 0$.\\
In light of (\ref{H22}), we deduce that
$\displaystyle f_0(|f(s)|)\leq |s|,$ for $|s|\leq \upsilon$ and hence
$$H(f^2(s))\leq |s||f_0(s)|=sf_0(s), \; \mbox{ for } |s|\leq \upsilon.$$
Using the fact that $H$ is strictly convex on $[0,r^2]$, it follows that
$$H\left( \frac{1}{2}s^2+\frac{1}{2}f^2(s) \right) \leq H(s^2)+H(f^2(s))\leq sf_0(s),\; \mbox{ for } |s|\leq \upsilon.$$
Since $H^{-1}$ is strictly increasing in $[0,r^2]$, we conclude
\begin{equation}\label{H}
s^2+f^2(s)\leq 2H^{-1}(sf_0(s)),\; \mbox{ for } |s|\leq \upsilon.
\end{equation} 
On the other hand, by virtue of the assumption {\bf H.II}, Lemma \ref{lemmaenergie} and (\ref{H}), we obtain
$$
\begin{array}{lll}
&& \displaystyle{y_t^2(1,t)+\theta\frac{EI}{\rho}f^2(y_{t}(1,t)) +\big(1+2\theta\big)\frac{EI}{\rho}f^2(y_{tx}(1,t))}\\[3mm]
&\leq &\displaystyle e_1 \big( y_t^2(1,t)+ f^2(y_{t}(1,t)))1_{\{|y_{t}|\geq \upsilon\}} + e_1 \big( y_t^2(1,t)+ f^2(y_{t}(1,t)))1_{\{|y_{t}|\leq \upsilon\}}\\[2mm]
&& +\displaystyle{(1+2\theta)\frac{EI}{\rho} e_1 y_{tx}(1,t)f(y_{tx}(1,t))1_{\{|y_{tx}|\geq \upsilon\}}+(1+2\theta)\frac{EI}{\rho}\big( y_{tx}^2(1,t)+f^2(y_{tx}(1,t))\big)1_{\{|y_{tx}|\leq \upsilon\}}}\\[2mm]
&&-\displaystyle{(1+2\theta)\frac{EI}{\rho} y_{tx}^2(1,t)1_{\{|y_{tx}|\leq \upsilon\}}} \\[3mm]
& \leq & \displaystyle e_1 y_{t}(1,t)f(y_{t}(1,t))+ e_1 H^{-1}\left( y_{t}(1,t)f_0(y_{t}(1,t))\right) \, 1_{\{|y_{tx}|\leq \upsilon\}} \\[2mm]
& & +(1+2\theta)\frac{EI}{\rho} e_1 y_{tx}(1,t)f(y_{tx}(1,t))1_{\{|y_{tx}|\geq \upsilon\}}+\displaystyle{(1+2\theta)\frac{2EI}{\rho}H^{-1}\left( y_{tx}(1,t)f_0(y_{tx}(1,t))\right) \, 1_{\{|y_{tx}|\leq \upsilon\}}}\\[3mm]
& \leq & \! \! \displaystyle{- e_2 E'(t)+ e_1 H^{-1}\left( y_{t}(1,t)f_0(y_{t}(1,t))\right) \, 1_{\{|y_{tx}|\leq \upsilon\}}+ (1+2\theta)\frac{2EI}{\rho}H^{-1}\left( y_{tx}(1,t)f_0(y_{tx}(1,t))\right) \, 1_{\{|y_{tx}|\leq \upsilon\}}},
\end{array}
$$
in which $e_1 = \max(1,\theta \frac{EI}{\rho})$ and $e_2 = \max(e_1, (1+2\theta)\frac{2EI}{\rho})$ and the following partition of $(0,1)$ has been considered:
$$\{|y_{t}|\geq \upsilon\}=\{x\in(0,1) : \; |y_{t}|\leq \upsilon\}\;\mbox{ and }\;\{|y_{tx}|\leq \upsilon\}=\{x\in(0,1) : \; |y_{tx}|\leq \upsilon\}.$$
This, together with (\ref{zz1}), yields
\begin{eqnarray}\label{eo}
E_0^\prime(t) &\leq & \big(1-\varepsilon e_2 \big)E^\prime (t)- \varepsilon K\;E(t) +\varepsilon e_1\;H^{-1}\left( y_{t}(1,t)f_0(y_{t}(1,t))\right) \, 1_{\{|y_{tx}|\leq \upsilon\}}\nonumber\\
&& +\varepsilon(1+2\theta)\frac{2EI}{\rho}H^{-1}\left( y_{tx}(1,t)f_0(y_{tx}(1,t))\right) \, 1_{\{|y_{tx}|\leq \upsilon\}}\nonumber\\
&\leq & - \varepsilon K\;E(t) +\varepsilon e_1 H^{-1}\left( y_{t}(1,t)f_0(y_{t}(1,t))\right) \, 1_{\{|y_{tx}|\leq \upsilon\}}\nonumber\\
&& +\varepsilon(1+2\theta)\frac{2EI}{\rho}H^{-1}\left( y_{tx}(1,t)f_0(y_{tx}(1,t))\right) \, 1_{\{|y_{tx}|\leq \upsilon\}}, \mbox{ a.e. } t\geq 0,
\end{eqnarray}
provided that $\varepsilon$ is small so that $1-\varepsilon e_2 >0.$

Thereafter, for $\varepsilon_{0}<r^{2}$ to be chosen later, we have $H^{\prime }\geq0$ and $H^{\prime \prime}\geq 0 $ over $(0,r^{2}]$. Moreover, $E^{\prime }\leq0$. Consequently, the functional $R$ defined by
\[
R(t):=H^{\prime }\bigg(\varepsilon_{0}\frac{E(t)}{E(0)}\bigg)E_0(t)+\delta E(t),
\]
is equivalent to $E(t),$ for some $\delta >0$.
\\
In addition, we have $\varepsilon_{0}\frac{E^{\prime }(t)}{E(0)}H^{\prime \prime
}\left(\varepsilon_{0}\frac{E(t)}{E(0)}\right)E_0(t)\leq 0$. This, together with (\ref{eo}), allows us to conclude that
\begin{eqnarray}
R^{\prime }(t)\! \! \!&=&\! \! \!\varepsilon_{0}\frac{E^{\prime }(t)}{E(0)}%
H^{\prime \prime}\left(\varepsilon_{0}\frac{E(t)}{E(0)}\right) \, E_0(t) + H^{\prime
}\left(\varepsilon_{0}\frac{E(t)}{E(0)}\right)E_0^{\prime}(t) + \delta E^\prime(t)\nonumber\\
&\leq& -\varepsilon K E(t)H^{\prime }\left(\varepsilon_{0}\frac{E(t)}{E(0)}\right) + \varepsilon e_1 H^{\prime
}\left(\varepsilon_{0}\frac{E(t)}{E(0)}\right) H^{-1}\left( y_{t}(1,t)f_0(y_{t}(1,t))\right) \, 1_{\{|y_{tx}|\leq \upsilon\}}\nonumber\\
&& +\varepsilon(1+2\theta)\frac{2EI}{\rho}H^{\prime
}\left(\varepsilon_{0}\frac{E(t)}{E(0)}\right) H^{-1}\left( y_{tx}(1,t)f_0(y_{tx}(1,t))\right) \, 1_{\{|y_{tx}|\leq \upsilon\}} + \delta E^\prime(t).
\label{r1'}
\end{eqnarray}
Our objective now is to estimate the second and the third term in the right-hand side of \eqref{r1'}. For that purpose, we introduce, as in \cite{AB1}, the convex conjugate $H^{*}$ of $H$ defined by
\begin{equation}\label{conjugate}
H^{\ast}(s)=s(H^{\prime })^{-1}(s)-H((H^{\prime })^{-1})(s)\text{ for } s\in(0,H^{\prime }(r^{2})),
\end{equation}
and $ H^{*}$  satisfies the following Young inequality:
\begin{equation}\label{ableq}
AB\leq H^{\ast}(A)+H(B) \text{ for } A\in (0,H^{\prime}(r^{2})),\ B\in (0,r^{2}).
\end{equation}
Now, taking  first $A_1=H^{\prime }\left(\varepsilon_{0}\frac{E(t)}{E(0)}\right)$ and $
B_1=H^{-1}\left( y_{t}(1,t)f_0(y_{t}(1,t))\right)$ and second $A_2=H^{\prime }\left(\varepsilon_{0}\frac{E(t)}{E(0)}\right)$ and $
B_2=H^{-1}\left( y_{tx}(1,t)f_0(y_{tx}(1,t))\right)$, we obtain
\begin{eqnarray}
R^{\prime }(t)&\leq& -\varepsilon K E(t)H^{\prime }\left(\varepsilon_{0}
\frac{E(t)}{E(0)}\right) +\varepsilon e_1 \; H^{\ast}\left( H^{\prime}\left(\varepsilon_{0}
\frac{E(t)}{E(0)}\right)\right)+\varepsilon e_1 \; H\left( H^{-1}(y_{t}(1,t)f_0(y_{t}(1,t))\right)\nonumber \\&+&(1+2\theta) \frac{2EI}{\rho} H^{\ast}\left( H^{\prime}\left(\varepsilon_{0}
\frac{E(t)}{E(0)}\right)\right) \nonumber +(1+2\theta)\frac{2EI}{\rho}H\left( H^{-1}(y_{tx}(1,t)f_0(y_{tx}(1,t))\right)+\delta E'(t)\nonumber\\
&\leq &-\varepsilon K E(t)H^{\prime }\left(\varepsilon_{0}\frac{E(t)}{E(0)}\right)+
\varepsilon e_1 \;\varepsilon_{0}\frac{E(t)}{E(0)}H^{\prime }\left(\varepsilon_{0}\frac{E(t)}{E(0)}\right)- \varepsilon e_1 \; H\left(\varepsilon_{0}\frac{E(t)}{E(0)}\right)\nonumber\\
&&+\varepsilon e_1 \;y_{t}(1,t)f_0(y_{t}(1,t))+
\varepsilon(1+2\theta)\frac{2EI}{\rho} \;\varepsilon_{0}\frac{E(t)}{E(0)}H^{\prime }\left(\varepsilon_{0}\frac{E(t)}{E(0)}\right)- \varepsilon(1+2\theta)\frac{2EI}{\rho} \; H\left(\varepsilon_{0}\frac{E(t)}{E(0)}\right)\nonumber\\
&+&\varepsilon(1+2\theta)\frac{2EI}{\rho} \;y_{tx}(1,t)f_0(y_{tx}(1,t))+\delta E^\prime(t) \nonumber\\
&\leq &-\varepsilon K E(t)H^{\prime }\left(\varepsilon_{0}\frac{E(t)}{E(0)}\right) +
\varepsilon\varepsilon_{0}\Big((1+2\theta)\frac{2EI}{\rho}+ e_1 \Big)\frac{E(t)}{E(0)}H^{\prime }\left(\varepsilon_{0}\frac{E(t)}{E(0)}\right)\nonumber\\
&&- \varepsilon e_2 E'(t)+\delta E^\prime(t). \nonumber\label{r1'2}\hspace{2cm}
\end{eqnarray}
With a suitable choice of $ \varepsilon_{0} $ and $\delta$, we deduce from the last inequality that
\begin{equation}
R^{\prime }(t)\leq - \varepsilon\left( K E(0)-\varepsilon_{0}\Big((1+\theta)\frac{2EI}{\rho}+ e_1 \Big)\right) \frac{E(t)}{E(0)%
}H^{\prime }\left(\varepsilon_{0}\frac{E(t)}{E(0)}\right)\leq -k H_{2}\left(\frac{
E(t)}{E(0)}\right),\label{r1'3}
\end{equation}
where $k=\varepsilon \Big(K E(0)-\varepsilon_{0}\Big((1+\theta)\frac{2EI}{\rho}+ e_1 \Big)\Big)>0$ and $H_{2}(s)=sH^{\prime}(\varepsilon_{0}s)$.

Since $E(t)$ and $R(t)$ are equivalent, there exist positive constants $a_{1}$ and $a_{2}$ such that
$$a_{1}R(t)\leq E(t)\leq a_{2}R(t).$$
We set now $S(t)=\frac{a_{1}R(t)}{E(0)}$. It is clear that  $S(t)\sim E(t)$. Taking into consideration the fact that $H_{2}^{\prime}(t),\ H_{2}(t)>0$ over $(0,1]$ (this is a direct consequence of strict convexity of $H$ on $(0,r^{2}]$) we deduce from (\ref{r1'3}) that
\[
S^{\prime }(t)\leq-k_{1}H_{2}(S(t)),\hspace{0,5cm}\text{ for all }t\in \mathbb{R_{+}}, \]
with  $k_{1}  >0$.\\
Recall that $\displaystyle H_{1}(t) = \int_{t}^{1} \frac{1}{H_{2}(s)}ds$ and integrating the last inequality over $[0,t]$, we obtain
$$
H_{1}(S(t))\geq H_{1}(S(0))+k_{1}t.
$$
Finally, since $H_{1}^{-1}$ is decreasing (as for $H_{1}$), we have
\[
S(t)\leq H^{-1}_{1} \left( k_{1} t+ k_{2} \right), \hspace{1cm}\text{with} \ k_{2}>0.
\]
Invoking once again the equivalence of $E(t)$ and $R(t)$, we deduce the desired result (\ref{theoremenergy1}).
\end{proof}

\begin{remark} As pointed out in the Introduction, the presence of the nonlinear moment control which involves $y_{tx}(1,t)$ is not necessary for the result stated in Theorem 3.1. In fact, the reader can easily check that the estimates obtained in the previous section remain valid as long as the nonlinear force control involves the velocity term $y_{t}(1,t)$.
\label{nrem}
\end{remark}

\section{Examples} \label{exa}
The objective of this section is to apply the inequality in \eqref{theoremenergy1} on some examples in order to show explicit stability results in term of asymptotic profiles in time. To proceed, we choose the function $H$ strictly convex near zero.

\subsection{Example 1}
Let $f$ be a function that satisfies
\[
c_{3}\vert s\vert^{p}\leq \vert f(s)\vert \leq c_{4}\vert s\vert^{\frac{1}{p}},
\]
with some $c_{3}$, $c_{4}>0$ and $p>1$. \\
For $f_{0}(s)=cs^{p} $, hypothesis {\bf H.II} is verified.
Then $H(s)=cs^{\frac{p+1}{2}}$, $H_{2}(s)=
\hat{c}_1 s^{\frac{p+1}{2}} $
 and
\[
H_{1}(t)=\int_{t}^{1}\frac{1}{c_1}s^{-\frac{p+1}{2}}\ ds =\frac{1}{
\hat{c}_2}(1-t^{-\frac{p-1}{2}}),\]
where $\hat{c}_i >0$, for $i=1,2$. Therefore,
\[
H_{1}^{-1}(t)=(\hat{c}_2 t+1)^{-\frac{2}{p-1}}.
\]
Using again (\ref{theoremenergy1}), we obtain
\[
E_0(t)\leq H_{1}^{-1}( k_{1}t +k_{2})=(
\hat{c}_2 ( k_{1}t +k_{2})+1)^{-\frac{2}{p-1}}.
\]

\begin{remark}\label{rmk}
Note that the exponential decay rate result has been obtained in \cite{CH:99}, for the case $f(s)=cs.$
\end{remark} 
\subsection{Example 2}
Let $f_{0}(s)=\frac{1}{s}\exp(-\frac{1}{s^{2}})$.
{Arguing} as in example 1, one can conclude that the energy of (\ref{(1V)}) satisfies
$$E_0(t)\leq \varepsilon \left( \ln \left(\frac{k_{1}t +k_{2}+c\exp(\frac{1}{\varepsilon_{0}})}{c}\right)\right)^{-1}.$$

\section{Stability of the closed-loop system}\label{sta}
Throughout this section, we shall assume without loss of generality that the physical parameters $EI, \, \rho$ are unit. Additionally, we shall suppose that (\ref{ang}), {\bf H.I} and {\bf H.III} are fulfilled. Moreover, the subsystem (\ref{(1V)}) is exponentially stable for the case $f_{0}(s)=cs$ (see Remark \ref{rmk}). This implies that there exist positive constants $M$ and $\eta$ such that
\begin{equation}
\| e^{-A t} \|_{\mathcal L(\mathcal{H})} \leq M e^{-\eta t}
, \;\; \forall t \geq 0.
\label{e}
\end{equation}

The aim of this section is to show the exponential stability of the closed-loop system (\ref{(6.0)}). We shall use the same arguments as put forth in \cite{CH:99} but with a number of changes born out of necessity.

First of all,  thanks to the assumptions (\ref{ang}), {\bf H.I} and {\bf H.III}, it follows from Proposition 3.1 that for any initial data $ \left( \phi , \omega \right) \in {\mathcal D} ({\mathcal A})$, the system has a unique global bounded solution on  $(0,\infty)$ \cite{CH:99}. For sake of completeness, we shall provide a sketch of the proof of this result. First, we know from \cite[Proposition 2 p. 526]{CH:99} that, thanks to the assumption {\bf H.I}, the operator $ A$ defined by (\ref{(5)})-(\ref{(6)}) is m-accretive in ${\mathcal H}$ with dense domain. This implies that the nonlinear operator $\mathcal{A}$ (see (\ref{(6.1)})) is also m-accretive with dense domain in $\mathcal{X}$. Additionally,  the operator $\mathcal{B}$ defined in (\ref{(6.1)}) is Lipschitz on bounded subsets of $\mathcal{X}$. Thereby, the closed-loop system (\ref{(6.0)}) has a local solution $(\phi, \omega )(t) =(y(t),z(t),\omega(t))$, for $t \in [0,T]$. Next, it suffices to prove that the solution is indeed global. To do so,
consider  the functional
\begin{eqnarray}
{\mathcal V} (t) &=& \displaystyle \frac{1}{2}(\omega(t)  - \varpi )^2 \int_0^1 y^2 dx +  \frac{I_d}{2} \; (\omega(t)  - \varpi )^2  \label{ya1} + \displaystyle \frac{1}{2} \int_0^1 \left \{  y_t^2 + y_{xx} ^2 - \varpi ^2  y^2 \right \} dx. \label{(yassir)}
\end{eqnarray}
Then, one can easily check that $\mathcal{V}$ is a Lyapunov function and satisfies along the regular solutions of (\ref{(6.0)})
\begin{equation}
\barr
\displaystyle \frac{d}{dt} {\mathcal V} (\phi,\omega)(t) =
-\displaystyle EI y_{tx}(1,t)f(y_{tx}(1,t))-EI y_{t}(1,t)f(y_{t}(1,t)) -(\omega (t)- \varpi  )\; \gamma \Bigl( \omega (t)-\varpi  \Bigr) \leq 0, \, \forall t \geq 0,
\label{(yassir*)}
\ear
\end{equation}
by means of the conditions {\bf H.I} and {\bf H.III}. Therefore, the solution of (\ref{(6.0)}) must be global.

Secondly,  we consider the subsystem
$$\left \{
\barr
\dot \psi(t) + A \psi(t)+ B (t,\psi(t))=0, \; t>0,\\
\psi(0)=\psi_0 , \ear \right. $$
where $\psi=(y,z)$, $\psi_0 \in {\mathcal D}(A)$ and
\beq
B (t,\psi )=( {\varpi}^2 -{\omega}^2 (t) ) P(\psi ),
\label{ka}
\eeq
in which $P$ is the compact operator on ${\mathcal H}$ defined by $P (y, z)=(0,y), \; \forall (y, z) \in {\mathcal H}.$ Moreover, $\omega (t)$ is the second component of the solution $(\psi(t),\omega(t))$ of the global system (\ref{(6.0)}) with initial data $(\psi_0,\omega_0) \in {\mathcal D}(\mathcal A) $. Accordingly, the solution $\psi(t)$ can be written as follows
\begin{equation}
\psi (t)= e^{-(t-T_0)A} \psi (T_0) + \int_{T_0}^t e^{-(t-\nu){A}} ( \omega^2(\nu) - \varpi^2  ) P \psi (\nu)\, d \nu,
\label{eq}
\end{equation}
for any $t \geq T_0$.

Clearly, it follows from (\ref{ka}) that
$$ \| B (t,\psi(t)) \|_{{\mathcal H}} \leq
|{\varpi}^2 -{\omega }^2 (t)| \|\psi(t) \|_{{\mathcal H}}.$$
Furthermore, in the light of (\ref{(yassir)}), (\ref{(yassir*)}) and the assumptions {\bf H.I}-{\bf H.III}, we have
$$\varpi -{\omega (\cdot)} \in L ^2 \left( [0, \infty[;\R \right) \cap L ^{\infty}
\left( [0, \infty[;\R \right) $$
and hence $\lim_{t \to +\infty} \omega(t) =\varpi $ \cite{CH:99}. Herewith, for any $ \epsilon >0$, there exists $T_0$ sufficiently large such that for each $t \geq T_0$, we have
\begin{equation}
| \omega^2(t) - \varpi^2 | <  \epsilon.
\label{li}
\end{equation}
Combining (\ref{li}) as well as (\ref{e}) and (\ref{eq}) and applying Gronwall's Lemma yields
\begin{equation}
\| \psi (t) \|_{\scriptscriptstyle {\mathcal H}} \leq M  e^{\scriptscriptstyle -(\eta-\epsilon M) (t-T_0)} \| \psi (T_0) \|_{\scriptscriptstyle {\mathcal H}},
\label{29..}
\end{equation}
for any $t \geq T_0$. Thereafter, one can choose $\epsilon$ so that $\eta-\epsilon M >0$, and thus the solution $\psi (t)$ of (\ref{(6.0)}) is exponentially stable in ${\mathcal H}$. Finally, amalgamating these properties and going back to (\ref{(6.0)}) one can show the exponential convergence of $\omega(t) -{\varpi}$ in $\R$ by means of {\bf H.III}. The proof runs on much the same lines as that of \cite[Theorem 2]{CH:99}. Indeed, we deduce from (\ref{(6.0)}) that
\begin{equation}
\frac{d}{dt}  \left( \omega (t) -\varpi \right) = \frac{\gamma \left( {\omega (t)}-{\varpi} \right)  \| y\|_{\scriptscriptstyle L^2(0,1)}^2 -2{I_d} \, \omega (t) <y,y_{t}>_{\scriptscriptstyle L^2 (0,1) }}{I_d \left( I_d + \| y\|_{\scriptscriptstyle L^2 (0,1)}^2\right)} - \frac{1}{{I_d}}  \gamma \left({\omega (t)}-{\varpi} \right) . \label{wo}
\end{equation}
Multiplying (\ref{wo}) by $\omega(t) -{\varpi}$ and using the assumption {\bf H.III}, we get
\begin{eqnarray}
\dfrac{1}{2} \frac{d}{dt}  \left( \omega (t) -\varpi \right)^2 &\leq & \frac{ \left( {\omega (t)}-{\varpi} \right) \gamma \left( {\omega (t)}-{\varpi} \right)  \| y\|_{\scriptscriptstyle L^2(0,1)}^2 -2{I_d} \, \omega (t) \left( {\omega (t)}-{\varpi} \right) <y,y_{t}>_{\scriptscriptstyle L^2 (0,1) }}{I_d \left( I_d + \| y\|_{\scriptscriptstyle L^2 (0,1)}^2\right)} \nonumber\\
&& -\frac{\kappa}{{I_d}}   \left( {\omega (t)}-{\varpi} \right)^2. \label{wo1}
\end{eqnarray}

Next, solving the above inequality, we get
\begin{eqnarray}
&& \left( \omega (t) -\varpi \right)^2 \leq   e^{\scriptscriptstyle \frac{-2 {\color{red}\kappa} (t-T_0)}{I_d}} ({\omega (t)}-{\varpi})^2 \label{029} \\
&& + \dfrac{1}{I_d}\displaystyle \int_{T_0}^t {e^{\scriptscriptstyle \frac{-2 {\color{red}\kappa} (t-s)}{I_d}}} \left( {\omega (s)}-{\varpi} \right) \left\{ \frac{2 \gamma \left( {\omega (s)}-{\varpi} \right)  \| y (s) \|_{\scriptscriptstyle L^2 (0,1)}^2 }{I_d + \| y (s)\|_{\scriptscriptstyle L^2 (0,1)}^2}  -\dfrac{4{I_d}  \omega (s)  <y (s),y_{s}(s)>_{\scriptscriptstyle L^2 (0,1) }}{I_d + \| y (s)\|_{\scriptscriptstyle L^2 (0,1)}^2} \right\} d s .  \nonumber
\end{eqnarray}
By virtue of the Lipschitz property of $\gamma$ (see {\bf H.III})  and using (\ref{29..}) as well as the boundedness of solutions, one can write (\ref{029}) after a simple calculation as follows
$$|{\omega (t)}-{\varpi}| \leq a e^{-b (t-T_0)},$$
for any $ t \geq T_0$ and for some positive constants $a$ and $b$.

To recapitulate,  we have proved the second main result:
\begin{theo}\label{exp}
Assume that (\ref{ang}), {\bf H.I}  and {\bf H.III} are fulfilled.  We also suppose that {\bf H.II} holds with $f_{0}(s)=cs$. Then, for each initial data $ \left( \phi _0, \omega _0 \right) \in {\mathcal D} ({\mathcal A}) $ the solution $ \left( \phi (t), \omega (t)  \right) $ of the closed-loop system (\ref{(6.0)}) 
exponentially tends to $(0, \varpi)$ in $ {\mathcal X} $ as $ t \to + \infty$.
\label{so2}
\end{theo}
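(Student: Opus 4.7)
The plan is to follow the two–step strategy already outlined after the introduction of the split $(\mathcal{A}+\mathcal{B})$, exploiting as the cornerstone Theorem \ref{Theop} in the \emph{linear-near-zero} case $f_{0}(s)=cs$, which provides the uniform exponential bound $\|e^{-At}\|_{\mathcal{L}(\mathcal{H})}\leq M e^{-\eta t}$ stated in (\ref{e}). Well-posedness of (\ref{(6.0)}) in $\mathcal{D}(\mathcal{A})$ being inherited from \cite{CH:99}, the first task is to extract from the Lyapunov functional $\mathcal{V}$ in (\ref{(yassir)}) and its dissipation identity (\ref{(yassir*)}), combined with \textbf{H.III} (so that $(\omega-\varpi)\gamma(\omega-\varpi)\geq K(\omega-\varpi)^{2}$), the integrability $\omega-\varpi\in L^{2}(0,\infty)\cap L^{\infty}(0,\infty)$, hence $\omega(t)\to\varpi$. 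This gives, for any preassigned $\varepsilon>0$, a time $T_{0}$ beyond which $|\omega^{2}(t)-\varpi^{2}|\leq\varepsilon$, as in (\ref{li}).

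Next, I would freeze $\omega(t)$ and treat the $\psi$-equation as a perturbation of the linear semigroup: the Duhamel formula (\ref{eq}) together with $\|B(t,\psi)\|_{\mathcal{H}}\leq|\omega^{2}(t)-\varpi^{2}|\|\psi\|_{\mathcal{H}}$ and (\ref{e}) yields
\[
e^{\eta(t-T_{0})}\|\psi(t)\|_{\mathcal{H}}\leq M\|\psi(T_{0})\|_{\mathcal{H}}+\varepsilon M\int_{T_{0}}^{t}e^{\eta(\nu-T_{0})}\|\psi(\nu)\|_{\mathcal{H}}\,d\nu,
\]
so Gronwall's lemma delivers (\ref{29..}). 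Choosing at the outset $\varepsilon<\eta/M$ makes $\eta-\varepsilon M>0$, and therefore $\psi(t)$ decays exponentially in $\mathcal{H}$.

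The last step promotes the merely $L^{2}$-type decay of $\omega-\varpi$ to an exponential one. Multiplying (\ref{wo}) by $\omega(t)-\varpi$, using \textbf{H.III} on the dissipative term and Cauchy--Schwarz together with the boundedness of $\omega$ and of $\|y\|_{L^{2}}$ on the cross term $\omega\langle y,y_{t}\rangle_{L^{2}}$, one obtains a differential inequality of the type
\[
\tfrac{d}{dt}(\omega-\varpi)^{2}+\tfrac{2K}{I_{d}}(\omega-\varpi)^{2}\leq C\|\psi(t)\|_{\mathcal{H}}^{2},
\]
in which the right-hand side is exponentially decaying by (\ref{29..}). A standard variation-of-constants estimate then gives the exponential convergence of $\omega(t)$ to $\varpi$, and combining both decays provides the announced exponential stability in $\mathcal{X}$.

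The only genuine obstacle is the Gronwall step leading to (\ref{29..}): one needs the smallness parameter $\varepsilon$ to beat the semigroup rate $\eta$, and this is possible only because the constants $M$ and $\eta$ produced by Theorem \ref{Theop} are independent of the solution, so no circularity appears. Everything else is bookkeeping analogous to \cite[Theorem 2]{CH:99}, but carried out without the boundary force control; the fact that Theorem \ref{Theop} supplies (\ref{e}) purely from the moment control $f$ (with $f_{0}$ linear) is precisely what makes the argument still go through.
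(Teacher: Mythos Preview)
Your proposal is correct and follows essentially the same route as the paper: Lyapunov dissipation via (\ref{(yassir*)}) and \textbf{H.III} to get $\omega\to\varpi$, then Duhamel plus Gronwall against the exponential semigroup bound (\ref{e}) to obtain (\ref{29..}), and finally the differential inequality for $(\omega-\varpi)^{2}$ obtained from (\ref{wo}) to upgrade to exponential decay of the angular component. The paper carries out exactly these steps, in the same order and with the same ingredients, referring to \cite[Theorem~2]{CH:99} for the routine details just as you do.
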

\begin{remark}
The authors believe that if the semigroup $e^{- A t}$ has a polynomial decay, then the original system (2.3) will be also polynomially stable. In fact, if $f_0(s)=cs^p$, with $p>1$, then the energy $E_0(t)$ has a rational decay rate (see Example 1 of Subsection 4.1) and in such a case the system (2.3) is expected to be polynomially stable. This case is worth investigating in a future work as the problem remains open.
\end{remark}

\section{Conclusion}
This note was concerned with the stabilization of the rotating disk-beam system. A feedback law, which consists of a nonlinear torque control exerted on the disk and  a nonlinear boundary control applied on the beam, is put forward. Then, it is shown that the closed-loop system is exponentially stable provided that the angular velocity of the disk does not exceed a well-defined value and the nonlinear functions involved in the feedback law obey reasonable conditions. This result improves the previous one in \cite{CH:99}, where the functions involved in the boundary control must satisfy strong and restrictive conditions such as  almost linear property.

\section*{Acknowledgment}
The valuable suggestions and comments from the editor and the anonymous referees are greatly appreciated.


\begin{thebibliography}{99}

\bibitem{AB1} F. Alabau-Boussouira, Convexity and weighted integral inequalities for energy decay rates of nonlinear dissipative hyperbolic systems, {\em Appl. Math. Optim.,} {\bf 51} (2005), 61--105.

\bibitem{AB2} F. Alabau-Boussouira, A unified approach via convexity for optimal energy decay rates of finite and infinite dimensional vibrating damped systems with applications to semi-discretized vibrating damped systems, {\em J. Differ. Eqs.,} {\bf 248}, (2010), 1473--1517.

    \bibitem{AB3} F. Alabau-Boussouira and K. Ammari, Sharp energy estimates for nonlinearly locally damped PDEs via observability for the associated undamped system, {\em J. Funct. Anal.,} {\bf 260} (2011), 2424--2450.

\bibitem{fi1}  K. Ammari, Z. Liu, Zhuangyi and M. Tucsnak, Decay rates for a beam with pointwise force and moment feedback, {\em Math. Control Signals Systems.,}  {\bf 15} (2002), 229--255.

\bibitem{fi2}  K. Ammari,  Asymptotic behavior of coupled Euler-Bernoulli beams with pointwise dissipation, {\em J. Math. Anal. Appl.,} {\bf 278} (2003), 65--76.

\bibitem{fi3}  K. Ammari, M. Tucsnak and G.Tenenbaum, A sharp geometric condition for the boundary exponential stabilizability of a square plate by moment feedbacks only, {\em Control of coupled partial differential equations,} 1--11, Internat. Ser. Numer. Math., 155, Birkh\"auser, Basel, 2007.


\bibitem{ABM} K. Ammari, A. Bchatnia and K. EL Mufti, Stabilization of the nonlinear damped wave equation via linear weak observability, {\em NoDEA Nonlinear Differential Equations and Applications,} {\bf 2} (2016), 23:6.

\bibitem{amm1} K. Ammari and M. Tucsnak, Stabilization of Bernoulli-Euler Beams by Means of a Point Feedback Force, {\em SIAM Journal on Control and Optimization}, {\bf 4} (2000), 1160--1181.

\bibitem{amm2} K. Ammari and S. Nicaise, Stabilization of elastic systems by collocated feedback, {\em Lecture Notes in Mathematics}, 2124,  Springer, Cham, 2015.

\bibitem{AY} M. A. Ayadi, A. Bchatnia, Asymptotic behavior of the Timoshenko-type system with nonlinear boundary control. {\em Adv. Pure Appl. Math}, {\bf 10}, (2019), no. 2, 171--182.

\bibitem{ABHM} M. A. Ayadi, A. Bchatnia, M. Hamouda and S. Messaoudi, General Decay in some Timoshenko-type systems with thermoelasticity second sound, {\em Advances in nonlinear Analysis}, {\bf 4} (2015), 263--284.

\bibitem{Ba:87} J. Baillieul and M. Levi, Rotational elastic dynamics, {\em Physica}, {\bf 27} (1987), 43--62.

\bibitem{bt:90} A. M. Bloch and E. S. Titi, On the dynamics of rotating elastic beams, {\em Proc. Conf. New Trends Syst. theory,} Genoa, Italy, July 9-11, 1990, Conte, Perdon, and Wyman, Eds. Cambridge, MA: Birkh$\ddot{\mbox{a}}$user, 1990.


\bibitem{chen} G. Chen, M. C. Delfour, A. M. Krall and G. Payre, Modelling, Stabilization and Control of Serially Connected Beams, {\em SIAM Journal on Control and Optimization}, {\bf 3} (1987), 526--546.

\bibitem{bc1} X. Chen, B. Chentouf and J.M. Wang,  Non-dissipative torque and shear force controls of a rotating flexible structure, {\em SIAM Journal on Control and Optimization},  {\bf 52} (2014), 3287--3311.

\bibitem{CH:99} B. Chentouf and J. F. Couchouron, Nonlinear feedback stabilization of a rotating body-beam without damping, {\em ESAIM: COCV.}, {\bf 4} (1999), 515--535.

\bibitem {bc2}  B. Chentouf, A simple approach to dynamic stabilization of a rotating body-beam, {\em Applied Math. Letters}, {\bf 19} (2006), 97--107.

\bibitem{bc3} B. Chentouf, Stabilization of memory type for a rotating disk-beam system, {\em Applied Math. Comput.},  {\bf 258} (2015), 227--236.

\bibitem{bc4}  B. Chentouf, Stabilization of the rotating disk-beam system with a delay term in boundary feedback, {\em nonlinear Dynamics}, {\bf 78} (2014), 2249--2259.

\bibitem{bc5}  B. Chentouf, Stabilization of a nonlinear rotating flexible structure under the presence of time-dependent delay term in a  dynamic boundary control, {\em IMA J. Math. Control Inf.}, {\em 33} (2016),  349--363.

\bibitem{bc6}  B. Chentouf, A minimal state approach to dynamic stabilization of the rotating disk-beam system with infinite memory, {\em IEEE Transactions on Automatic Control}, DOI: 10.1109/TAC.2016.2518482.

\bibitem{bc7}  B. Chentouf and J. M. Wang, Stabilization and optimal decay rate for a non-homogeneous rotating body-beam with dynamic boundary controls, {\em J. Math. Anal. Appl.}, {\bf 318} (2006), 667--691.

\bibitem {bc8}  B. Chentouf and J. M. Wang, On the stabilization of the disk-beam system via torque and direct strain feedback controls, {\em IEEE Transactions on Automatic Control}, {\bf 16} (2015), 3006--3011.

\bibitem{ac:98} J.-M. Coron and B. d'Andr\'ea-Novel, Stabilization of a rotating body-beam without damping, {\em IEEE. Trans. Automat. Contr.}, {\bf 43} (1998), 608-618.


    \bibitem{gu} Y. P. Guo and J. M. Wang, The active disturbance rejection control of the rotating disk-beam system with boundary input disturbances, \emph{
Internat. J. Control}, {\bf 89} (2016), 2322--2335.

    \bibitem{guo} Y. P. Guo and J. M. Wang,  Stabilization of a non-homogeneous rotating body-beam system with the torque and nonlinear distributed controls, \emph{Journal of Systems Science and Complexity}, {\bf 30} (2017), 616--626.

\bibitem{Las} I. Lasiecka, Stabilization of wave and plate like equations with nonlinear dissipation on the boundary, {\em Journal of Differential Equations}, {\bf 2} (1989), 340--381.

\bibitem{lastat} I. Lasiecka and D. Tataru, Uniform boundary stabilization of semilinear wave equations with nonlinear boundary damping, {\em Differential Integral Equations,} {\bf 6} (1993), 507--533.

\bibitem{lxs:96} H. Laousy, C. Z. Xu and G. Sallet, Boundary feedback stabilization of a rotating body-beam system, {\em IEEE Trans. Automat. Contr}, {\bf 41} (1996), 241--245.

\bibitem{lion} J. L. Lions, Exact Controllability, Stabilization and Perturbations for Distributed Systems, {\em SIAM Review}, {\bf 1} (1988), 1--68.

\bibitem{mor1} \"O. Morg\"ul, Control and stabilization of a flexible beam attached to a rigid body, \emph{Internat. J. Control}, {\bf 51} (1990), 11--31.

\bibitem{Mo1:91} \"O. Morg\"ul, Orientation and stabilization of a flexible beam attached to a rigid body: Planar motion, {\em IEEE. Trans. Automat. Contr.}, {\bf 36} (1991), 953--963.

\bibitem{mor2} \"O. Morg\"ul, Boundary control of a Timoshenko beam attached to a rigid body: planar motion, \emph{Internat. J. Control}, {\bf 54} (1991), 763--791.

\bibitem {Mo2:94} O. Morg\"ul, Control and Stabilization of a rotating flexible structure, {\it Automatica}, {\bf 30} (1994), 351--356.

\bibitem{nag} K. Nagaya, Method of Control of Flexible Beams Subject to Forced Vibrations by Use of Inertia Force Cancellations, {\em Journal of Sound and Vibration}, {\bf 2} (1995), 184--194.

\bibitem{wa} H. K. Wang and G. Chen, Asymptotic behavior of solutions of the one dimensional wave equation with nonlinear boundary stabilizer, {\em SIAM J. Control Optim.}, {\bf 4} (1989), 758--775.

\bibitem {xb:93} C. Z. Xu and J. Baillieul, Stabilizability and stabilization of a rotating
body-beam system with torque control, {\em IEEE Trans. Automat. Contr.}, {\bf 38} (1993), 1754--1765.

\bibitem {xs:92} C.Z. Xu and G. Sallet, Boundary stabilization of a rotating flexible system, {\em Lecture Notes in Control and Information Sciences,} vol. 185, R. F. Curtain, A. Bensoussan, and J. L. Lions, Eds. New York: Springer Verlag, pp. 347--365, 1992.

\bibitem{zu} E. Zuazua, Uniform stabilization of the wave equation by nonlinear boundary feedback, {\em SIAM J. Control Optim.}, {\bf 2} (1990), 466--477.

\end{thebibliography}
\end{document}